\newtheorem{theorem}{Theorem}[section]
\newtheorem{lemma}{Lemma}[section]
\newtheorem{corollary}{Corollary}[section]
\theoremstyle{definition}
\newtheorem*{definition}{Definition}
\newtheorem*{remark}{Remark}
\numberwithin{equation}{section}
\begin{document}

\title[On distribution modulo 1 of the sum of powers of a Salem number]{On distribution modulo 1 of the sum of powers of a Salem number}


\author{\sc Dragan Stankov}
\address{Dragan Stankov\\
Katedra Matematike RGF-a\\  
University of Belgrade\\   
Belgrade, \DJ u\v sina 7\\
Serbia}
\email{dstankov@rgf.bg.ac.rs}

\subjclass[2010]{Primary 11K06; Secondary 11R06.}

\maketitle


\begin{abstract}
Let $\theta $ be a Salem number and $P(x)$ a polynomial with integer coefficients. It is well-known that the sequence $(\theta^n)$ modulo 1 is
dense but not uniformly distributed. In this article we discuss the sequence $(P(\theta^n))$ modulo 1. Our
first approach is computational and consists in estimating the number of n so that the fractional part of $(P(\theta^n))$ falls into a subinterval of the partition of $[0,1]$. If Salem number is of degree 4 we can obtain explicit density function of the sequence, using an algorithm which is also given. Some examples confirm that these two approaches give the same result.

\end{abstract}

\bigskip
\section{Introduction}
Studying the distribution modulo 1 of the powers of a fixed real number $\theta $ greater
than 1, has been of interest for some time. In his monograph \cite{Sal}, R. Salem considered the case of
certain special real numbers $\theta $. For instance, he showed that $\{\theta^n\}$ tends to $0$ in $\mathbb{R}/\mathbb{Z}$ when $\theta$ is a Pisot number. If $\theta$ is a Salem
number, $\{\theta^n\}$ is dense in $\mathbb{R}/\mathbb{Z}$, i.e. the fractional parts of $\theta^n$ are dense in the
interval [0, 1) but not uniformly distributed. (See
\cite{Ber}, p. 87-89.) Moreover, Salem numbers are the only known
numbers whose powers are dense in $\mathbb{R}/\mathbb{Z}$.
Recall that a Pisot number is a real algebraic integer greater than 1 whose conjugates other than itself
have modulus less than 1. A Salem number is a real algebraic integer
greater than 1 whose conjugates other than itself have modulus less than
or equal to 1 and at least one conjugate has modulus equal to 1. It is well
known that one and only one of these conjugates $\theta^{-1}$ is inside the unit disc while the
others are on the boundary. The degree $2t$ of $\theta$ is necessarily even and at least
equal to 4.

We will use the following standard notation:
\begin{definition}
For any real number x, we denote:
\begin{enumerate}
\item Integer part of x: $[x] = \max \{ n \in Z : n \le x \}$.
\item Fractional part of x: $\{x\} = x-[x]$.
\item Congruence modulo 1: $x\equiv x'\; \textrm{mod}\; 1 \Leftrightarrow x-x'\in \mathbb{Z}$.
\item Distance from x to the nearest integer:
$\|x\| = \min \{ |x - n| : n \in \mathbb{Z}\}$.
\end{enumerate}
\end{definition}
Let $u =(u(n))$ be an infinite sequence of real numbers. For $0\le x \le 1$, we define the repartition function
\begin{definition}
\[f(x) = \lim_{N\to \infty}\frac{1}{N} \textrm{card}\left\{n<N|\{u(n)\right\}<x\}\]
\end{definition}
There is an analogy between the repartition function and the distribution function as well as
between $f'(x)$ and density function in the Probability theory.

From now on, we suppose that $\theta$ is a Salem number, $u(n)=P(\theta^n)$ where $P(x)$ is a polynomial with integer coefficients.
Denote conjugates of $\theta$ by $\theta^{-1}$, $\exp(\pm2i\pi\omega_1),\ldots ,\exp(\pm2i\pi\omega_{t-1})$.
The sum of an algebraic integer and its conjugates is an integer and therefore for all
$n\in \mathbb{N}$,
\[\theta^n+\theta^{-n}+2\sum_{j=1}^{t-1}\cos2\pi n\omega_j\equiv 0\;\;(\textrm{mod}\;1) \]
so that the distribution of $\theta^n$ (mod 1) is essentially that of $-2\sum_{j=1}^{t-1}\cos2\pi n\omega_j$.
\section{The main theorem}
The aim of this paper is to define an algorithm for determination of the repartition function and its first derivative, in the case $\theta$ is a Salem number of degree four. The algorithm is described in the following
\begin{theorem}\label{glavna}
Let $\theta$ be a Salem number of degree 4, $n$, $m$ positive integer numbers, $P_m(x)=\sum_{j=0}^m a_jx^j$ a polynomial of degree $m$ with integer coefficients. Let $Q^{-1}_1(x),Q^{-1}_2(x),\ldots,Q^{-1}_K(x)$ be all branches of the inverse function of $Q_m(x)=-2\sum_{j=0}^m a_jT_j(x)$, restricted to the domain $[-1,1]$ where $T_j(x)$ is Chebishev polynomial of the first kind of degree $j$.
If domain of $Q_k^{-1}(x)$ is $[\alpha_k ,\beta_k ]$, $\alpha_k<\beta_k$, $k=1,2,\ldots, K$ then we define its extension $S_k(x)$ on $\mathbb{R}$:
\[S_k(x) = \left\{ \begin{array}{c}
Q_k^{-1}(x);\;x\in [\alpha_k,\beta_k]\\
Q_k^{-1}(\alpha_k);\;x\in (-\infty,\alpha_k)\\
Q_k^{-1}(\beta_k);\;x\in (\beta_k,\infty)\\
\end{array}
\right.
,
\]
in the case $Q_k^{-1}(x)$ is decreasing for $x\in [\alpha_k,\beta_k]$,
\[
S_k(x) = \left\{ \begin{array}{c}
-Q_k^{-1}(x);\;x\in [\alpha_k,\beta_k]\\
-Q_k^{-1}(\alpha_k);\;x\in (-\infty,\alpha_k)\\
-Q_k^{-1}(\beta_k);\;x\in (\beta_k,\infty)\\
\end{array}
\right.
\]
in the case $Q_k^{-1}(x)$ is increasing for $x\in [\alpha_k,\beta_k]$.

Let us denote $g(x)=\pi^{-1}\sum_{k=1}^K(\arccos(S_k(x)))$ and let $M$ be a natural number such that $M\geq \max(\{|\alpha_1|,|\alpha_2|,\ldots,|\alpha_K|\}\bigcup\{|\beta_1|,|\beta_2|,\ldots,|\beta_K|\})$.
The repartition function of $u(n)=P_m({\theta}^n)$, is $f(x)=\sum_{i=-M}^{M}(g(x+i)-g(i))$.
The first derivative of the repartition function is
\begin{equation}\label{eq:efprime}
f'(x)=\pi^{-1}\sum_{i=-M}^{M}\sum_{k=1}^K|(\arccos(Q^{-1}_k(x+i)))'|.
\end{equation}
\end{theorem}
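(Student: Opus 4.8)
The plan is to reduce $\{u(n)\}$ modulo $1$ to the sequence $\{Q_m(\cos2\pi n\omega_1)\}$, to identify the limiting distribution of $\cos2\pi n\omega_1$ through Weyl's equidistribution theorem, and then to evaluate the measure of $\{y\in[-1,1]:\{Q_m(y)\}<x\}$ by cutting $Q_m$ into its monotone branches; the extensions $S_k$, the function $g$ and the bound $M$ in the statement are precisely the bookkeeping devices needed for that last step.

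First I would use the trace. Since $\theta$ has degree $4$ its conjugates are $\theta,\theta^{-1},\exp(\pm2i\pi\omega_1)$ with $\omega_1$ irrational (if $\omega_1\in\mathbb{Q}$ then $\exp(2i\pi\omega_1)$ is a root of unity and the minimal polynomial of $\theta$ cyclotomic, impossible for $\theta>1$). Applying $P_m$ to the four conjugates and summing,
\[
P_m(\theta^n)+P_m(\theta^{-n})+2\sum_{j=0}^m a_jT_j(\cos2\pi n\omega_1)\in\mathbb{Z},
\]
hence $\{u(n)\}=\{Q_m(\cos2\pi n\omega_1)+\varepsilon_n\}$ with $\varepsilon_n=a_0-P_m(\theta^{-n})\to0$ because $\theta^{-n}\to0$. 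One then argues that replacing the argument by the null perturbation $\varepsilon_n$ does not change the repartition function: the indicator of $\{Q_m(\cos2\pi n\omega_1)+\varepsilon_n\}<x$ and that of $\{Q_m(\cos2\pi n\omega_1)\}<x$ disagree only when $\cos2\pi n\omega_1$ lies in a shrinking neighbourhood of the finite set $Q_m^{-1}(\mathbb{Z}\cup(\mathbb{Z}+x))$, a set of $n$ of density $0$ by the equidistribution invoked next. Thus $f(x)=\lim_N N^{-1}\,\mathrm{card}\{n<N:\{Q_m(\cos2\pi n\omega_1)\}<x\}$.

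Next, by Weyl's theorem $(n\omega_1\bmod1)$ is uniformly distributed in $[0,1]$, so $N^{-1}\sum_{n<N}h(\cos2\pi n\omega_1)\to\int_0^1h(\cos2\pi t)\,dt$ for every Riemann-integrable $h$; taking $h$ to be the indicator of $\{Q_m(\cdot)\}<x$ (Riemann integrable, with finite discontinuity set since $t\mapsto Q_m(\cos2\pi t)$ is a nonconstant trigonometric polynomial) gives $f(x)=\mu(\{y\in[-1,1]:\{Q_m(y)\}<x\})$, where $\mu$ is the push-forward of Lebesgue measure on $[0,1]$ under $t\mapsto\cos2\pi t$, i.e.\ $\mu([-1,y])=1-\pi^{-1}\arccos y$. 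Now split $[-1,1]$ into the maximal intervals $I_1,\dots,I_K$ on which $Q_m$ is monotone; then $Q_m|_{I_k}$ is a bijection onto $[\alpha_k,\beta_k]$ with inverse $Q_k^{-1}$, and with $E_x=\bigcup_{i\in\mathbb{Z}}[i,i+x)$ one has $\{y:\{Q_m(y)\}<x\}\cap I_k=Q_k^{-1}(E_x\cap[\alpha_k,\beta_k])$. A direct computation with $\mu$, using $\arccos(-z)=\pi-\arccos z$, shows that for $[a,b)\subseteq[\alpha_k,\beta_k]$ one has $\mu(Q_k^{-1}([a,b)))=\pi^{-1}(\arccos S_k(b)-\arccos S_k(a))=G_k(b)-G_k(a)$ with $G_k:=\pi^{-1}\arccos S_k$ — the sign flip in $S_k$ being what makes $G_k$ nondecreasing in both the increasing and the decreasing case, and the clamping what makes $G_k$ constant on $(-\infty,\alpha_k]$ and on $[\beta_k,\infty)$. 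Hence $\mu(Q_k^{-1}(E_x\cap[\alpha_k,\beta_k]))=\sum_{i\in\mathbb{Z}}(G_k(i+x)-G_k(i))$, and since $0\le x\le1$ and $[\alpha_k,\beta_k]\subseteq[-M,M]$ every term with $i\notin[-M,M]$ vanishes. Summing over $k$ and writing $g=\sum_kG_k=\pi^{-1}\sum_k\arccos S_k$ gives $f(x)=\sum_{i=-M}^M(g(x+i)-g(i))$; differentiating this finite sum termwise yields $f'(x)=\pi^{-1}\sum_{i=-M}^M\sum_{k=1}^K(\arccos S_k(x+i))'$, and since $(\arccos S_k)'\ge0$ while $\arccos S_k=\pi-\arccos Q_k^{-1}$ in the increasing case, the interior terms are $|(\arccos Q_k^{-1}(x+i))'|$ and the clamped ones are $0$, which is \eqref{eq:efprime}.

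The main obstacle is the first step: making rigorous the passage from $\{P_m(\theta^n)\}$ to $\{Q_m(\cos2\pi n\omega_1)\}$, i.e.\ that the vanishing but nonzero sequence $\varepsilon_n$ does not affect the limiting repartition function, and — closely tied to it — that the threshold set $\{y:\{Q_m(y)\}=x\}$ is $\mu$-null for every $x$ (which fails only in the degenerate case where $Q_m$ is constant). Once that is settled, the remainder is routine; the one point requiring care is the treatment of the branch endpoints, and this is exactly what the extension $S_k$ and the constant $M$ are built to absorb.
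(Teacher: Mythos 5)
Your proposal is correct and follows essentially the same route as the paper: reduce $\{P_m(\theta^n)\}$ via the trace to $\{Q_m(\cos 2\pi n\omega_1)\}$, invoke equidistribution of $n\omega_1$ modulo $1$, and decompose $Q_m$ into monotone branches whose preimage measures are computed with $\arccos$ (your push-forward measure $\mu$ is just the paper's direct counting of $\|n\omega\|$ in disguise). The one place you go beyond the paper is in explicitly justifying that the vanishing perturbation $\varepsilon_n=a_0-P_m(\theta^{-n})$ does not alter the repartition function — a step the paper dismisses with the word ``essentially'' — and your density-zero argument for it is sound since $Q_m$ is nonconstant.
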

\begin{proof}
Let conjugates of $\theta$ be $\theta^{-1}$, $\exp(2i\pi\omega)$, $\exp(-2i\pi\omega)$. Since for any natural $n$
\[a_j(\theta^{nj}+\theta^{-nj}+2\cos2\pi nj\omega)\equiv 0\;\;(\textrm{mod}\;1)\;\;j=0,1,\ldots,m \]
\[P_m(\theta^n)=\sum_{j=0}^m a_j{\theta}^{nj} \equiv -\sum_{j=0}^m a_j(\theta^{-nj}+2\cos2\pi nj\omega) \;\;(\textrm{mod}\;1)\]
so that the distribution of $P_m(\theta^n)$ (mod 1) is essentially that of
\[-2\sum_{j=0}^m a_j\cos2\pi nj\omega=-2\sum_{j=0}^m a_jT_j(\cos2\pi n\omega)=Q_m(\cos2\pi n\omega),\]
where $T_j(x)=\sum_{k=0}^jb^{\langle j\rangle}_kx^k$ is Chebishev polynomial of the first kind. Hence we have
\[Q_m(w)=-2\sum_{j=0}^m a_j\sum_{k=0}^jb^{\langle j\rangle}_k w^k=-2(c_mw^m+c_{m-1}w^{m-1}+\cdots+c_0).\]
where we denoted $w=\cos2\pi n\omega$ and
\begin{equation}\label{eq:coef}
\begin{split}
c_m&=a_mb^{\langle m \rangle}_m,\;\;\\
c_{m-1}&=a_{m-1}b^{\langle m-1 \rangle}_{m-1}+a_mb^{\langle m \rangle}_{m-1},\\
&\ldots\\
c_{j}&=a_{j}b^{\langle j \rangle}_{j}+a_{j+1}b^{\langle j+1 \rangle}_{j}+\cdots+a_mb^{\langle m \rangle}_{j},\\
&\ldots\\
c_0&=a_0b^{\langle 0 \rangle}_0+a_1b^{\langle 1 \rangle}_0+a_2b^{\langle 2 \rangle}_0+\ldots+a_mb^{\langle m \rangle}_0\\
\end{split}
\end{equation}
If we denote an integer $M=2\sum_{j=0}^m|c_j|$ then it is obvious that $Q_m(w)\in[-M,M]$ thus $\{Q_m(w)\}<x\Leftrightarrow$ there is an $i\in\{-M,-M+1,\dots,M\}$ such that $i\leq Q_m(w)<i+x$. Now we conclude that there is a $k\in\{1,2,\dots,K\}$ such that $Q^{-1}_{k}(i)\leq w<Q^{-1}_{k}(i+x)$ or $Q^{-1}_{k}(i)\geq w>Q^{-1}_{k}(i+x)$. Previous two inequalities can be replaced with $S_{k}(i)\geq w>S_{k}(i+x)$. It is easy to verify that $(2\pi)^{-1}\arccos(\cos2\pi t)=\|t\|$ and, as a consequence of this, that $(2\pi)^{-1}\arccos(-\cos2\pi t)=(2\pi)^{-1}\arccos\cos(2\pi t+\pi)=\|t+1/2\|$, for that reason we have
\[(2\pi)^{-1}\arccos(Q^{-1}_{k}(i))\leq \|n\omega\|<(2\pi)^{-1}\arccos(Q^{-1}_{k}(i+x))\]
in the case $Q^{-1}_{k}(i)$ is decreasing, or
\[(2\pi)^{-1}\arccos(-Q^{-1}_{k}(i))\leq \|n\omega+1/2\|<(2\pi)^{-1}\arccos(-Q^{-1}_{k}(i+x))\]
in the case $Q^{-1}_{k}(i)$ is increasing.

It is fulfilled that $\|n\omega\|$ and $\|n\omega+1/2\|$ are uniformly distributed on $[0,1/2]$ because $1$, $\omega$ are \textbf{Q}-linearly independent \cite{Ber}, Theorem 5.3.2 so we can use \cite{Ber} Theorem 4.6.3. Consequently, for all $L$, $R$ such that $0\leq L<R\leq 1/2$,
\[\lim_{N\to \infty}\frac{1}{N} \textrm{card}\{n<N|L\leq \|n\omega\|<R\}=2(R-L),\]
\[\lim_{N\to \infty}\frac{1}{N} \textrm{card}\{n<N|L\leq \|n\omega+\frac{1}{2}\|<R\}=2(R-L).\]
Let $K_1\leq K$ be natural number such that $Q^{-1}_{1},Q^{-1}_{2},\ldots,Q^{-1}_{K_1}$ are decreasing and $Q^{-1}_{K_1+1},Q^{-1}_{K_1+2},\ldots,Q^{-1}_{K}$ are increasing.
Now we can determine the repartition function
\[f(x) = \lim_{N\to \infty}\frac{1}{N} \textrm{card}\{n<N|\{Q_m(\cos(2\pi n\omega))\}<x\}=\]
\[\lim_{N\to \infty}\frac{1}{N} \textrm{card}\bigcup_{i=-M}^{M}\Big(\bigcup_{k=1}^{K_1}\left\{n<N|(2\pi)^{-1}\arccos(Q^{-1}_{k}(i))\right.\leq\]
\[\leq \left.\|n\omega\|<(2\pi)^{-1}\arccos(Q^{-1}_{k}(i+x))\right\}\bigcup\]
\[\bigcup_{k=K_1+1}^K\left\{n<N|(2\pi)^{-1}\arccos(-Q^{-1}_{k}(i))\right.\leq  \]
\[\leq\left.\|n\omega+1/2\|<(2\pi)^{-1}\arccos(-Q^{-1}_{k}(i+x))\right\}\Big)=\]
\[\sum_{i=-M}^{M}\sum_{k=1}^K\left(\pi^{-1}\arccos(S_{k}(i+x))-\pi^{-1}\arccos(S_{k}(i))\right)=\]
\[\sum_{i=-M}^{M}(g(x+i)-g(i))\]
because all sets in the double union are disjoint. Now it is obvious that the first derivative of the repartition function is
\begin{equation}
f'(x)=\sum_{i=-M}^{M}g'(x+i).
\end{equation}
\end{proof}
\begin{remark}
Since $\{P(\theta^n)\}=\{P(\theta^n)+l\},\; l\in\mathbb{Z}$ we can take, without loss of generality, that $a_0=0$.
\end{remark}
\begin{remark}
Since $g(x+i)-g(i)=g(x+i)+c-g(i)-c$ it is clear that $f(x)$ will not be changed if we take $g(x)+c$, $c\in\mathbb{R}$ instead of $g(x)$.
\end{remark}
\begin{remark}
It is necessary to introduce branches of the inverse function $Q(x)$ more precisely. Since $Q(x)$ is a polynomial we can introduce a partition of $[-1,1]$ $-1=x_0<x_1<\cdots<x_K=1$ such that $Q'(x_1)=Q'(x_2)=\cdots=Q'(x_{K-1})=0$ and $Q'(x)$ is positive or negative on each sub-interval $(x_{k-1},x_{k})$, $k=1,2,\ldots,K$. Let us introduce $\alpha_k,\beta_{k}$: if $Q'(x)$ is positive on $(x_{k-1},x_{k})$ then $Q(x_{k-1})=\alpha_k$, $Q(x_{k+1})=\beta_k$;
if $Q'(x)$ is negative on $(x_{k-1},x_{k})$ then $Q(x_{k-1})=\beta_k$, $Q(x_{k})=\alpha_k$. Now we define $Q_k^{-1}(x)$ as the inverse function of $Q(x)$ on $[x_{k-1},x_{k}]$. Let us notice that the first derivative of $Q_k^{-1}(x)$ can tends to infinity only in end points of its domain $[\alpha_k,\beta_{k}]$.
\end{remark}
\begin{corollary}
Let the line $x = v$, $v\in[0,1]$ be a vertical asymptote of the graph of the first derivative of the repartition function $y = f(x)$. $\lim_{x\rightarrow v-0} f'(x)=\infty$ if and only if $v=1$ or $v=\{\beta_k\}$. $\lim_{x\rightarrow v+0} f'(x)=\infty$ if and only if $v=0$ or $v=\{\alpha_k\}$, $k=1,2,\ldots, K$.
\end{corollary}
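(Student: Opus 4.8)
I would prove the corollary by reading the behaviour of $f'$ straight off formula \eqref{eq:efprime}. Put $g_k(y)=\pi^{-1}\arccos\bigl(S_k(y)\bigr)$, so that $g=\sum_{k=1}^K g_k$ and $f'(x)=\sum_{i=-M}^M\sum_{k=1}^K g_k'(x+i)$, where each term equals $\pi^{-1}\bigl|(\arccos Q_k^{-1}(x+i))'\bigr|$ on $[\alpha_k,\beta_k]$ and vanishes off it. The first step is to pin down the shape of one term $g_k'$. Since $S_k$ is constant on $(-\infty,\alpha_k)$ and on $(\beta_k,\infty)$, we have $g_k'\equiv0$ outside $[\alpha_k,\beta_k]$. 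On the open interval $(\alpha_k,\beta_k)$ the point $w=Q_k^{-1}(y)$ lies strictly between the partition points $x_{k-1}$ and $x_k$, hence strictly inside $(-1,1)$, and $Q'(w)\neq0$; so $g_k'(y)=\pi^{-1}\,|(Q_k^{-1})'(y)|\big/\sqrt{1-(Q_k^{-1}(y))^2}$ is continuous and strictly positive there. And as $y\to\alpha_k^{+}$ or $y\to\beta_k^{-}$, $Q_k^{-1}(y)$ tends to one of $x_{k-1},x_k$: if that endpoint is $\pm1$ then $1/\sqrt{1-(Q_k^{-1})^2}\to+\infty$, and otherwise it is an interior critical point of $Q$, so $Q'\to0$ and $|(Q_k^{-1})'|\to+\infty$ — this is exactly the dichotomy in the Remark, combined with the vertical tangent of $\arccos$ at $\pm1$. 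Since every factor is positive, $g_k'(y)\to+\infty$ in either case. Summary: $g_k'\ge0$, it vanishes off $[\alpha_k,\beta_k]$, it is continuous and positive on $(\alpha_k,\beta_k)$, and it has one-sided limit $+\infty$ at $\alpha_k$ from the right and at $\beta_k$ from the left, while its one-sided limit is $0$ at $\alpha_k$ from the left and at $\beta_k$ from the right.

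Because $f'$ is a finite sum of these non-negative functions, there is no cancellation, so at every $v\in[0,1]$ the left-hand (respectively right-hand) limit of $f'$ at $v$ is $+\infty$ exactly when the left-hand (respectively right-hand) limit of at least one summand $g_k'(x+i)$ is $+\infty$, and is finite otherwise; in particular $f'$ is continuous away from a finite set of abscissas, and the vertical asymptotes of $y=f'(x)$ are precisely the points at which some one-sided limit of $f'$ is $+\infty$. It remains to unwind these one-sided limits; here the choice $M\ge\max\{|\alpha_k|,|\beta_k|\}$ ensures that whenever $v+i\in\{\alpha_k,\beta_k\}$ the corresponding shift $i$ lies in the range $\{-M,\dots,M\}$.

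For the left-hand limit: $\lim_{x\to v-0}f'(x)=+\infty$ iff some summand $g_k'(x+i)$ tends to $+\infty$ as $x$ increases to $v$, i.e. as $x+i\to(v+i)^{-}$. By the first paragraph a term $g_k'$ has left-hand limit $+\infty$ only at its right endpoint $\beta_k$, so this occurs iff $v+i=\beta_k$ for some $i\in\{-M,\dots,M\}$ and some $k$. If $v\in(0,1)$ this forces $i=[\beta_k]$ and $v=\{\beta_k\}$ (which can happen only when $\beta_k\notin\mathbb{Z}$). If $v=1$ it forces $\beta_k=i+1\in\mathbb{Z}$; conversely, since the domain of $f$ contains no points $x>1$, the only way $f'$ can be unbounded near $x=1$ is through a singularity of the form $x+i\to\beta_k^{-}$, so the hypothesis that $v=1$ is an asymptote already yields $\lim_{x\to1-0}f'(x)=+\infty$. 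Combining the two cases: $\lim_{x\to v-0}f'(x)=+\infty$ iff $v=1$ or $v=\{\beta_k\}$ for some $k$, where the alternative $v=1$ absorbs precisely the branches with integer $\beta_k$ (whose fractional part is $0$). The right-hand limit is treated identically, with $\alpha_k$ in place of $\beta_k$ and the endpoint $0$ in place of $1$: a term $g_k'$ has right-hand limit $+\infty$ only at $\alpha_k$, so $\lim_{x\to v+0}f'(x)=+\infty$ iff $v+i=\alpha_k$ for some $i,k$, i.e. iff $v=0$ (some $\alpha_k\in\mathbb{Z}$) or $v=\{\alpha_k\}$.

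The only step that is more than formal is the first paragraph — that each $g_k'$ actually diverges to $+\infty$, rather than merely being unbounded, at the two endpoints of its domain — and this is essentially already contained in the Remark. Everything afterwards is the observation that a finite sum of non-negative functions has a one-sided singularity exactly where some term does, the elementary translation of ``approaching $\beta_k$ (resp.\ $\alpha_k$) from inside $[\alpha_k,\beta_k]$'' into ``left-hand (resp.\ right-hand) limit'', and the bookkeeping needed to decide, for the two boundary abscissas $0$ and $1$, which of them an integer $\alpha_k$ or $\beta_k$ contributes to.
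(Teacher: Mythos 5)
Your proof is correct and follows essentially the same route as the paper: differentiate the sum term by term, observe that each term diverges at a point exactly when either $Q_k^{-1}\to\pm1$ (the $\arccos$ factor) or $(Q_k^{-1})'\to\infty$ at an endpoint $\alpha_k,\beta_k$ (the Remark's dichotomy), and translate these locations into $v=\{\alpha_k\}$, $v=\{\beta_k\}$, or the boundary values $0,1$ coming from the integers $Q(\pm1)$. Your explicit remark that all summands are non-negative, so a one-sided singularity of the sum occurs exactly when some term has one, makes precise a step the paper only asserts; otherwise the arguments coincide.
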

\begin{proof}
We proved in the Theorem \ref{glavna} that \[f'(x)=\sum_{i=-M}^{M}g'(x+i)=\sum_{i=-M}^{M}\sum_{k=1}^K\pi^{-1}\arccos'(S_{k}(i+x))=\]
\[=-\pi^{-1}\sum_{i=-M}^{M}\sum_{k=1}^K(1-S_{k}^2(i+x))^{-1/2}S_{k}'(i+x)=\]
\[=-\pi^{-1}\sum_{i=-M}^{M}\sum_{k=1}^K(1-(Q^{-1}_{k})^2(i+x))^{-1/2}S_{k}'(i+x)\]

$\lim_{x\rightarrow v-0} f'(x)=\infty$ if and only if there are $i_0$, $k_0$ such that
\[\lim_{x\rightarrow v-0}(1-(Q^{-1}_{k_0})^2(i_0+x))^{-1/2}S_{k_0}'(i_0+x)=\infty.\]
There are two cases: either $Q^{-1}_{k_0}(i_0+v)=\pm 1$ or $\lim_{x\rightarrow v-0}S_{k_0}'(i_0+x)=\infty$. If $Q^{-1}_{k_0}(i_0+v)=\pm 1$ then $i_0-v=Q_{k_0}(\pm 1)$. Since $Q_{k_0}(\pm 1)$ is an integer and $v\in[0,1]$ we conclude that either $v=0$ or $v=1$. But $v=0$ is impossible because $v-0$ will be out of the domain of $Q^{-1}_{k_0}(x)$. If $\lim_{x\rightarrow v-0}S_{k_0}'(i_0+x)=\infty$ then, using the last remark,
either $i_0+v=\alpha_{k_0}$ or $i_0+v=\beta_{k_0}$. Again $i_0+v=\alpha_{k_0}$ is impossible because $i_0+v-0$ will be out of the domain of $ S_{k_0}'(x)$. If $i_0+v=\beta_{k_0}$ then we conclude that $ v=\{\beta_{k_0}\}$ with an exception: if $\beta_{k_0}$ is an integer then its fractional part is $0$ but, as we have seen, $v=0$ is impossible. Nevertheless the claim is true because, in that case the line $x=1$ should be a vertical asymptote of the graph.

It is obvious that $\lim_{x\rightarrow v+0} f'(x)=\infty$ if and only if $v=0$ or $v=\{\alpha_k\}$ can be proved completely analogously.
\end{proof}

We present next procedure for sketching the graph of the first derivative of the repartition function, resulting from the previous corollary:

\begin{enumerate}
\item Find set $\overline{A}$ of local minimum points, set $\overline{B}$ of local maximum points and set $\overline{S}$ of (horizontal inflection) stationary points  of $Q(\cos(t))$.

\item Find set $A$ of fractional parts of values at local minimum points, set $B$ of fractional parts of values at local maximum points and set $S$ of fractional parts of values at stationary points of $Q(\cos(t))$.

\item Let $x_0,x_1,\ldots, x_r$, $0=x_0<x_1<\cdots<x_r=1$ be sorted $r+1$ elements of $A\cup B\cup S$.

\item If $x_i\in A\cup S,\;x_{i+1}\in B\cup S$ then $f'(x)$ has vertical asymptotes $x=x_i$, $x=x_{i+1}$ on interval $(x_i,x_{i+1})$ so $f'(x)$ has the shape of $\cup $.

\item If $x_i\in A\cup S,\;x_{i+1}\in A\setminus(B\cup S)$ then $f'(x)$ has vertical asymptote $x=x_i$ on interval $(x_i,x_{i+1})$ so $f'(x)$ has the shape of left half of $\cup $, we will denote it by $\lfloor$.

\item If $x_i\in B\setminus(A\cup S),\;x_{i+1}\in B\cup S$ then $f'(x)$ has vertical asymptote $x=x_{i+1}$ on interval $(x_i,x_{i+1})$ so $f'(x)$ has the shape of right half of $\cup $, we will denote it by $\rfloor$.

\item If $x_i\in B\setminus(A\cup S),\;x_{i+1}\in A\setminus(B\cup S)$ then $f'(x)$ has no vertical asymptote on interval $(x_i,x_{i+1})$ so $f'(x)$ has the shape of $_\smile$.
\end{enumerate}
\begin{remark}
In the first item of the previous procedure we have to solve the equation $-Q'(\cos t)\sin t=0\Leftrightarrow \sin t=0 \vee Q'(\cos t)=0 $.
Solutions of $\sin t=0$ are $t=k\pi$, $k\in\mathbb{Z}$.
If $x$ is a solution of $Q'(x)=0$ and $-1\le x\le 1$ then $t=\arccos(x)+2k\pi$, $k\in\mathbb{Z}$ is a stationary point of $Q(\cos(t))$.
Thus, if a solution of $Q'(x)=0$ is out of $\mathbb{R}$ or greater than 1 in modulus we should ignore it.

In the second item we have to find $\{Q(\cos k\pi)\}=\{Q(\pm 1)\}=0$, $k\in\mathbb{Z}$ so that $0\in A\cup B\cup S$.
Since the fractional part of a real number is in $[0,1)$ we should take that $0$ and $1$ must be both in or both out of set $A$, as well as $B$ and $S$.
We conclude that $1\in A\cup B\cup S$.
\end{remark}

\section{Linear, quadratic and cubic polynomial}

If $P(x)=a_1 x$ then, using the notation of the Theorem \ref{glavna}, we have only one branch of the inverse function of $Q(x)=-2a_1x$ i.e. $Q^{-1}(x)=\frac{-x}{2a_1}$, so that $g(x)=\pi^{-1}\arccos(\frac{-x}{2a_1})$.
The repartition function is $f(x)=\pi^{-1}\sum_{i=-2a_1}^{2a_1-1}(\arccos(-\frac{x+i}{2a_1})-\arccos(-\frac{i}{2a_1}))$.
The first derivative of the repartition function is
\[f'(x)=\frac{1}{2a_1\pi}\left(\frac{1}{\sqrt{1-\frac{(x-2a_1)^2}{4a_1^2}}}+\frac{1}{\sqrt{1-\frac{(x-2a_1+1)^2}{4a_1^2}}}+\cdots+\frac{1}{\sqrt{1-\frac{(x+2a_1-1)^2}{4a_1^2}}}\right).\]
If we take $a_1=1$ we get Dupain's formulae cited in \cite{DMR}.

Hereafter we suppose that $P(x)=a_2 x^2+a_1 x$ and then, using the notation of the Theorem \ref{glavna}, we have two branches of the inverse function of $Q(x)=-4a_2x^2-2a_1x+2a_2$ i.e. $Q_1^{-1}(x)=-\frac{a_1+\sqrt{a_1^2+8a_2^2-4a_2x}}{4a_2}$, $Q_2^{-1}(x)=-\frac{a_1-\sqrt{a_1^2+8a_2^2-4a_2x}}{4a_2}$. If we denote
$\Delta=a_1^2+8a_2^2 - 4a_2x$;
\[G_1(x)= \left\{ \begin{array}{l}
-\arccos(Q_1^{-1}(x)),\;\; \textrm{if}\; \Delta\ge 0 \;\;\textrm{and} -1\le Q_1^{-1}(x)\le 1,\\
0,\;\; \textrm{othervise};
\end{array}
\right.
\]
\[G_2(x)= \left\{ \begin{array}{l}
-\arccos(Q_2^{-1}(x)),\;\; \textrm{if}\; \Delta\ge 0 \;\;\textrm{and} -1\le Q_2^{-1}(x)\le 1\\
0,\;\; \textrm{othervise};
\end{array}
\right.
\]
then we can prove that
$ g(x)=\pi^{-1}(G_1(x)+G_2(x)-\pi)$ in the case that $x<2a_2+2a_1-4a_2$ and $a_2>0$;
$ g(x)=\pi^{-1}(G_1(x)+G_2(x)+\pi)$ in the case that $x>2a_2+2a_1-4a_2$ and $a_2<0$.
Since
\[G_1'(x)= \left\{ \begin{array}{l}
\frac{2|a_2|}{(16a_2^2-(a_1+(a_1^2+8a_2^2-4a_2x)^{1/2})^2)(a_1^2+8a_2^2-4a_2x)^{1/2}},\;^{\textrm{if}\; \Delta\ge 0,\; |Q_1^{-1}(x)|\le 1}\\
0,\;\; \textrm{othervise};
\end{array}
\right.
\]
\[G_2'(x)= \left\{ \begin{array}{l}
\frac{2|a_2|}{(16a_2^2-(a_1-(a_1^2+8a_2^2-4a_2x)^{1/2})^2)(a_1^2+8a_2^2-4a_2x)^{1/2}},\;^{\textrm{if}\; \Delta\ge 0,\; |Q_2^{-1}(x)|\le 1}\\
0,\;\; \textrm{othervise};
\end{array}
\right.
\]
we can determine $g'(x)$ explicitly by $g'(x)=\pi^{-1}(G_1'(x)+G_2'(x))$.
The repartition function is $f(x)=\sum_{i=-M}^{M}(g(x+i)-g(i))$, $M=4|a_2|+2|a_1|+2|a_2|$ .
The first derivative of the repartition function is  $f'(x)=\sum_{i=-M}^{M}g'(x+i)$.

Since $Q'(-\frac{a_1}{4a_2})=0$ $Q(x)$ has extremum $V=\frac{a_1^2}{4a_2}+2a_2$ at $x=-\frac{a_1}{4a_2}$. Using previous Corollary we can conclude that the graph of the first derivative of the repartition function $y = f'(x)$ has an inner vertical asymptote $x=v$, $v=\{V\}\in(0,1)$ if and only if
\begin{equation}\label{eq:cond}
-1<-\frac{a_1}{4a_2}<1,\;\; a_1\ne 0,\;\; V\notin \mathbb{Z}.
\end{equation}
If $a_2>0$ then $Q(x)$ has maximum $V $ so that $\lim_{x\rightarrow v-0} f'(x)=\infty$. In that case $\lim_{x\rightarrow 0+0} f'(x)=\infty$ so that the graph of $y = f'(x)$ has shape $\cup_\smile$. Similarly if conditions \ref{eq:cond} fulfilled and $a_2<0$ than the graph of
$y = f'(x)$ has shape $_\smile \cup$. If any of the conditions \ref{eq:cond} is not fulfilled then the graph of $y = f'(x)$ has shape $\cup $.

Finally we suppose that $P(x)=a_3x^3+a_2 x^2+a_1 x$ so that we have three branches of the inverse function of $Q(x)=-8a_3x^3-4a_2x^2+(6a_3-2a_1)x+2a_2$. Its explicit formulas are clumsy so we will not cite them here. The roots of $Q'(x)=0$ are
\[ \frac{-a_2\pm\sqrt{9a_3^2+a_2^2-3a_1a_3}}{6a_3}.\]
In the Table \ref{shapetable}, using the procedure for sketching the graph of $f'(x)$, we represent different shapes of graphs.


\begin{table}[ht]
\caption{Procedure for sketching the graph of the first derivative of the repartition function}\label{shapetable}
\renewcommand\arraystretch{1.5}
\noindent\[
\begin{array}{|c|c|c|c|c|c|c|c|c|c|c|c|}
\hline
a_3,a_2,a_1&\overline{x}_1&\overline{x}_2&_{Q(\overline{x}_1)}&_{Q(\overline{x}_2)}&A&B&S&f'(x)\\
\hline\hline
1,1,1&-0.61&0.27&-0.11&2.63&.89,0,1&.63,0,1&&{\bigcup_{\smile}\bigcup}\\
\hline
3,5,6&-0.68&0.12&4.22&10.39&.22,0,1&.39,0,1&&\lfloor\bigcup\rfloor\\
\hline
3,3,10&-0.17&-0.17&6.11&6.11&0,1&0,1&{.11}&\bigcup\bigcup\\
\hline
1,-1,-2&-0.5&0.83&-5&4.48&0,1&.48,0,1&&\bigcup\rfloor\\
\hline
1,2,3&-0.67&0&2.82&4&.82,0,1&0,1&&\lfloor\bigcup\\
\hline
1,-2,-2&-0.39&1.06&-6.21&&.79&0,1&& _{\smile} \bigcup\\
\hline
1,2,-2&-1.06&0.39&&6.21&0,1&.21&&\bigcup_{\smile}\\
\hline
1,0,0&-0.5&0.5&-2&2&0,1&0,1&&\bigcup\\
\hline
1,1,4&\notin \mathbb{R}&\notin \mathbb{R}&&&0,1&0,1&&\bigcup\\
\hline
\end{array}
\]
\end{table}

\section{Some polynomials of degree $m>3$}

If $P(x)=x^m$ then $u(n)=(\theta^m)^n$. We need a Salem's lemma which is proved in \cite{Sal},\cite{Smy}:
\begin{lemma}\label{Salem}
If $\theta $ is a Salem number of degree $2t$ then so is $\theta^m$ for all $m\in \mathbb{N}$.
\end{lemma}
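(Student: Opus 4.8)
The plan is to prove Lemma \ref{Salem} by tracking what happens to the conjugates of $\theta$ under the map $\alpha \mapsto \alpha^m$, and then checking that the resulting algebraic number meets every clause of the definition of a Salem number. Recall that $\theta$ has degree $2t \ge 4$, with conjugate set $\{\theta, \theta^{-1}, e^{2i\pi\omega_1}, e^{-2i\pi\omega_1}, \ldots, e^{2i\pi\omega_{t-1}}, e^{-2i\pi\omega_{t-1}}\}$. Raising each of these to the $m$-th power, the candidate set of conjugates of $\theta^m$ is $\{\theta^m, \theta^{-m}, e^{2i\pi m\omega_1}, \ldots, e^{-2i\pi m\omega_{t-1}}\}$: one real number $>1$, its inverse inside the unit disc, and $2(t-1)$ points on the unit circle (counted with multiplicity at this stage).

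First I would argue that $\theta^m$ is a real algebraic integer $>1$: it is a power of an algebraic integer, hence an algebraic integer, and $\theta>1$ gives $\theta^m>1$. Next, the conjugates of $\theta^m$ (over $\mathbb{Q}$) form a subset of the image set above, since any $\mathbb{Q}$-automorphism sends $\theta$ to one of its conjugates and therefore sends $\theta^m$ to the corresponding $m$-th power. Consequently every conjugate of $\theta^m$ has modulus $\le 1$ except $\theta^m$ itself, and $|\theta^{-m}| = \theta^{-m} < 1$, while all the circle-conjugates have modulus exactly $1$; so once we know the degree is right, $\theta^m$ satisfies the Salem conditions (in particular at least one conjugate, e.g. $e^{2i\pi m\omega_1}$, has modulus $1$).

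The real content — and the step I expect to be the main obstacle — is showing that $\theta^m$ genuinely has degree $2t$, i.e. that passing to the $m$-th power does not collapse the field: one must rule out $\mathbb{Q}(\theta^m) \subsetneq \mathbb{Q}(\theta)$. The standard way is to show $\mathbb{Q}(\theta^m) = \mathbb{Q}(\theta)$. Suppose not; then $\mathbb{Q}(\theta^m)$ is a proper subfield, and since $\theta$ satisfies $x^m - \theta^m = 0$ over $\mathbb{Q}(\theta^m)$, the extension $\mathbb{Q}(\theta)/\mathbb{Q}(\theta^m)$ has degree $d>1$ dividing $m$, and $\theta$ together with some of its Galois conjugates of the form $\zeta\theta$ ($\zeta$ a root of unity with $\zeta^d=1$, $\zeta \ne 1$) must be roots of the minimal polynomial of $\theta$ over $\mathbb{Q}(\theta^m)$. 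But $|\zeta\theta| = \theta > 1$, so $\zeta\theta$ would be a conjugate of $\theta$ of modulus $>1$ distinct from $\theta$, contradicting the defining property of a Salem number (all conjugates other than $\theta$ have modulus $\le 1$). Hence $d=1$ and $\mathbb{Q}(\theta^m)=\mathbb{Q}(\theta)$, so $\deg(\theta^m)=2t$. Finally I would note that this degree count also forces the $2(t-1)$ numbers $e^{\pm 2i\pi m\omega_j}$ to be distinct and to furnish exactly the required $t-1$ conjugate pairs on the unit circle, and that one checks $\theta^m \ne \theta^{-m}$ so that the real pair is genuine — completing the verification that $\theta^m$ is a Salem number of degree $2t$. (For a cleaner exposition one may instead cite Smyth's argument from \cite{Smy} for the field equality; the key inequality $|\zeta\theta|=\theta>1$ is what makes it work and is special to Salem, not Pisot-type, constraints.)
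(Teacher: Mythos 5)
Your proof is correct. Note that the paper does not actually prove this lemma: it simply cites \cite{Sal} and \cite{Smy}, so there is no in-paper argument to compare against. The proof you give is the standard one from those references: the conjugates of $\theta^m$ lie among the $m$-th powers of the conjugates of $\theta$, and the only real issue is degree preservation, which you settle by showing $\mathbb{Q}(\theta^m)=\mathbb{Q}(\theta)$ — a proper containment would force some $\zeta\theta$ ($\zeta\neq 1$ an $m$-th root of unity) to be a $\mathbb{Q}$-conjugate of $\theta$ of modulus $\theta>1$, contradicting the Salem property. One inessential slip: the degree $d$ of $\theta$ over $\mathbb{Q}(\theta^m)$ need not divide $m$ a priori (you only know the minimal polynomial divides $x^m-\theta^m$, so $d\le m$), but your argument uses only $d>1$, so nothing is affected.
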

Thus, if we denote $\theta_1=\theta^m$ we get $u(n)=\theta_1^n$ which is a well known sequence. Doche, Mend\`es France and Ruch \cite{DMR} proved next
\begin{lemma}\label{Doche}
Let $\theta_1 $ be a Salem number of degree $2t$, then the repartition function
$f(x)$ of the sequence $(\theta_1^n)$ modulo 1 satisfies
\[f'(x)=1+2\sum_{k=1}^{\infty}J_0(4k\pi)^{t-1}\cos 2\pi kx\]
on $(0,1)$ for all $t\ge 2$.
\end{lemma}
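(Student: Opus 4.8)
The plan is to reduce the assertion to uniform distribution on a torus, compute the Fourier--Stieltjes coefficients of the limiting law, recognise them as a power of the Bessel function $J_0$, and then invert. Write the unimodular conjugates of $\theta_1$ as $e^{\pm2\pi i\omega_1},\dots,e^{\pm2\pi i\omega_{t-1}}$. As recalled in the introduction, $\theta_1^{\,n}+\theta_1^{-n}+2\sum_{j=1}^{t-1}\cos 2\pi n\omega_j\in\mathbb Z$, hence
\[\{\theta_1^{\,n}\}=\Big\{-2\sum_{j=1}^{t-1}\cos 2\pi n\omega_j-\theta_1^{-n}\Big\}.\]
Since $\theta_1^{-n}\to 0$, this bounded perturbation does not alter the repartition function at its continuity points, so it suffices to determine the limiting distribution of $v(n):=-2\sum_{j=1}^{t-1}\cos 2\pi n\omega_j$ modulo $1$.

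First I would invoke that $1,\omega_1,\dots,\omega_{t-1}$ are linearly independent over $\mathbf Q$ (the general--degree form of \cite{Ber}, Theorem 5.3.2); by Weyl's criterion the sequence $(n\omega_1,\dots,n\omega_{t-1})$ is then uniformly distributed in $\mathbb T^{t-1}$. Consequently the empirical distribution of $v(n)$ on $\mathbb R/\mathbb Z$ converges weakly to the push--forward $\mu$ of Lebesgue measure on $\mathbb T^{t-1}$ under $(\phi_1,\dots,\phi_{t-1})\mapsto -2\sum_j\cos 2\pi\phi_j\bmod 1$. Being a finite convolution of translates of the arcsine--type law with density $(\pi\sqrt{4-y^2})^{-1}\mathbf 1_{|y|<2}$, the measure $\mu$ is absolutely continuous, so $f$ is its distribution function on $[0,1]$ and $f'$ its density at every point where that density is continuous.

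Next I would compute, for $k\in\mathbb Z$, using independence of the coordinates,
\[\widehat\mu(k)=\int_{\mathbb R/\mathbb Z}e^{-2\pi iky}\,d\mu(y)=\Big(\int_0^1 e^{4\pi ik\cos 2\pi\phi}\,d\phi\Big)^{t-1}.\]
The inner integral is real: its imaginary part $\int_0^1\sin(4\pi k\cos 2\pi\phi)\,d\phi$ changes sign under $\phi\mapsto\phi+\tfrac12$ and hence vanishes. Substituting $\psi=2\pi\phi$ in the real part and using the integral representation $J_0(z)=\frac1{2\pi}\int_0^{2\pi}\cos(z\cos\psi)\,d\psi$ gives $\int_0^1 e^{4\pi ik\cos 2\pi\phi}\,d\phi=J_0(4k\pi)$. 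Therefore $\widehat\mu(k)=J_0(4k\pi)^{t-1}$; in particular $\widehat\mu(0)=J_0(0)^{t-1}=1$, and $\widehat\mu(-k)=\widehat\mu(k)$ since $J_0$ is even.

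Finally I would carry out Fourier inversion on $(0,1)$. All singular points of the (unreduced) density of $-2\sum_{j=1}^{t-1}\cos 2\pi\phi_j$ lie at the even integers $\pm2\pm2\pm\cdots$, which collapse to $0\equiv 1\pmod 1$; hence on $(0,1)$ the density of $\mu$ is a finite sum of smooth translates, in particular continuous and of locally bounded variation there, while lying in $L^1(\mathbb R/\mathbb Z)$. By the localization (Jordan) criterion for pointwise convergence of Fourier series, $\sum_{|k|\le N}\widehat\mu(k)e^{2\pi ikx}\to f'(x)$ for every $x\in(0,1)$, i.e.
\[f'(x)=1+2\sum_{k=1}^{\infty}J_0(4k\pi)^{t-1}\cos 2\pi kx,\qquad x\in(0,1).\]
I expect the main obstacle to be precisely this last step: for $t=2$ and $t=3$ one has $J_0(4k\pi)^{t-1}=O(k^{-(t-1)/2})$, so the series is only conditionally convergent, and one must argue through the local regularity of $f'$ on $(0,1)$ --- not through absolute summability --- that it genuinely represents $f'$ there, as well as verify that the endpoints $0,1$ (where $f'$ is unbounded) are correctly excluded. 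Everything preceding this is routine once the $\mathbf Q$--linear independence of $1,\omega_1,\dots,\omega_{t-1}$ is granted.
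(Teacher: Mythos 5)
The paper offers no proof of this lemma at all: it is imported verbatim from Doche, Mend\`es France and Ruch \cite{DMR}, so your argument has to stand on its own --- and it does, reconstructing in substance the computation of \cite{DMR}. The reduction to $v(n)=-2\sum_{j}\cos 2\pi n\omega_j$ (the term $\theta_1^{-n}\to 0$ being harmless by uniform continuity of test functions on $\mathbb{R}/\mathbb{Z}$), the appeal to the $\mathbf{Q}$-linear independence of $1,\omega_1,\dots,\omega_{t-1}$ plus Weyl's criterion, and the identification $\widehat\mu(k)=\bigl(\int_0^1 e^{4\pi ik\cos 2\pi\phi}\,d\phi\bigr)^{t-1}=J_0(4k\pi)^{t-1}$ via the integral representation of $J_0$ are all correct. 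You have also put your finger on the only genuinely delicate step: since $J_0(4k\pi)^{t-1}=O(k^{-(t-1)/2})$, the series is merely conditionally convergent for $t=2,3$, and your use of the Dirichlet--Jordan localization principle is the right fix --- the unreduced density $g^{*(t-1)}$ (with $g(y)=(\pi\sqrt{4-y^2})^{-1}\mathbf{1}_{|y|<2}$, so a convolution power rather than a sum of translates) has singular support contained in $2\mathbb{Z}$, hence its reduction modulo $1$ is smooth, in particular locally of bounded variation, on $(0,1)$, and the symmetric partial sums converge there to $f'(x)$. The one point worth stating explicitly in a write-up is that absolute continuity of $\mu$ guarantees the defining limit for $f(x)$ exists at \emph{every} $x$, not merely at continuity points of an a priori limit law; with that added, the proof is complete and self-contained, which is more than the paper provides.
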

Here $J_0(\cdot)$ is the Bessel function of the first kind of index $0$.

There are integer coefficients $a_j$ of $P(x)$ such that $Q(x)=-2^mx^m$. We can use equations \ref{eq:coef} to find such $a_j$. The power $x^n$ can be expressed in terms of the Chebyshev polynomials of degrees up to $n$ (for proof see \cite{MasHan}, chapter 2.3.1):
\begin{equation}\label{eq:mnadk}
x^m=2^{1-m}\sum_{k=0}^{[m/2]} {^\prime}{m\choose k}
T_{m-2k}(x),
\end{equation}
where the dash $(\sum{^\prime})$ denotes that the $k$-th term in the sum is to be halved if
$m$ is even and $k = m/2$. Let $m$ be odd, we conclude that if
\[P(x)=\sum_{k=0}^{(m-1)/2} {m\choose k} x^{m-2k}
,\]
then $Q(x)=-2^mx^m$, its inverse function can be easily found: $Q^{-1}(x)=-\sqrt[m]{x}/2$. Using \ref{eq:efprime} we obtain that
\[f'(x)=\frac{1}{\pi}\sum_{i=-2^m}^{2^m-1}\frac{\sqrt[m]{x+i}}{2m(x+i)\sqrt{1-(\sqrt[m]{x+i})^2/4}}.\]
We will show that $f'(1/2+x)=f'(1/2-x)$, $|x|\le 1/2$. For that reason $x=1/2$ is a line of symmetry of the graph of $f'(x)$. It is convenient to introduce
\[g'(x)=\frac{1}{\pi}\frac{\sqrt[m]{x}}{2mx\sqrt{1-(\sqrt[m]{x})^2/4}},\]
then we have
\[
\begin{array}{lll}
f'(1/2+x)&=\sum_{i=-2^m}^{2^m-1}g'(1/2+x+i)&(g'(x)\textrm{ is even})\\
&=\sum_{i=-2^m}^{2^m-1}g'(-1/2-x-i)&(i=j-1)\\
&=\sum_{j=-2^m+1}^{2^m}g'(-1/2-x-j+1)&(j=-i)\\
&=\sum_{i=-2^m}^{2^m-1}g'(1/2-x+i)&\\
&=f'(1/2-x).&
\end{array}
\]
Let $m$ be even, we conclude from \ref{eq:mnadk} that if
\[P(x)=\sum_{k=0}^{m/2-1} {m\choose k} x^{m-2k}+\frac{1}{2}{m\choose {m/2}}
,\]
then $Q(x)=-2^mx^m$, its inverse function can be easily found: $Q^{-1}(x)=\pm\sqrt[m]{x}/2$. Using the algorithm presented in the Theorem \ref{glavna} we obtain that
\[f'(x)=\frac{1}{\pi}\sum_{i=1}^{2^m}\frac{\sqrt[m]{-x+i}}{m(-x+i)\sqrt{1-(\sqrt[m]{-x+i})^2/4}}.\]

\begin{center}
\begin{figure}[!htp]
	\includegraphics[width=0.6\linewidth, bb= 0 0 400 300]{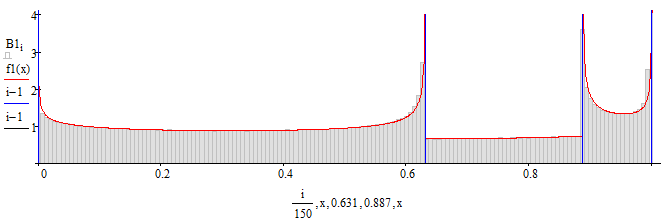}
\caption{$f'(x)$ (the red curve) of $(\{P(\theta^n)\})$, where $P(x)=x^3+x^2+x$ and distribution histogram of the sequence. The shape of $f'(x)$ is $\bigcup_{\smile}\bigcup$.}
\end{figure}
\end{center}

\begin{center}
\begin{figure}[!htp]
    \includegraphics[width=0.6\linewidth, bb= 0 0 400 300]{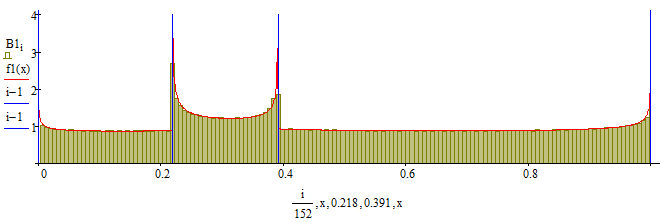}
\caption{$f'(x)$ (the red curve) of $(\{P(\theta^n)\})$, where $P(x)=3x^3+5x^2+6x$ and distribution histogram of the sequence. The shape of $f'(x)$ is $\lfloor\bigcup\rfloor$.}
\end{figure}
\end{center}

\begin{center}
\begin{figure}[!htp]
	\includegraphics[width=0.55\linewidth, bb= 0 0 400 300]{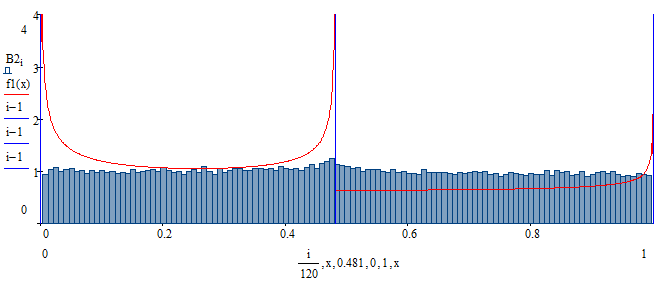}
\caption{$f'(x)$ (the red curve) of $(\{P(\theta^n)\})$, where $P(x)=x^3-x^2-2x$ and $\theta$ is a Salem number of degree 4. The distribution histogram of $(\{P(\theta_1^n)\})$ where $\theta_1$ is a Salem number of degree 6. We can notice that it is close to the density function of the uniform distribution. The shape of $f'(x)$ is $\bigcup\rfloor$.}
\end{figure}
\end{center}

\section{Some examples}

To illustrate the algorithm and the procedure, we give examples of distributions for some polynomials from the Table 1. Using the definition of $f(x)$ and the well known fact that the first derivative of the function $f(x)$ on a small interval $[a,b]$ could be estimated with the finite difference: $f(b)-f(a)$ divided by $b-a$, we will approximate $f'(x)$.
The interval $[0, 1]$ is divided into $p$ pieces.
We compute the fractional part of $P(\theta^n)$ for $1 \leq n \leq N$, and count
the number of $n$ so that the fractional part of $P(\theta^n)$
falls into each of subintervals.
The vertical axis indicates the number of such n divided by $N/p$ so that this normalisation results in a relative histogram that is most similar to $f'(x)$.

Y. Dupain and J. Lesca \cite{DL} conclude that for large degrees t, the sequence $(\{\theta^n\})$
is close to being equidistributed, a fact that S. Akiyama and Y. Tanigawa \cite{AT}
make very explicit in their article. We can notice that the same property is valid for the sequence $(P(\theta^n))$ modulo 1 (Fig. 3).

\end{document}